
\documentclass[12pt]{amsart}
\usepackage{amsmath,amssymb,latexsym,soul,cite,amsthm,color,enumitem,graphicx,tikz, mathtools,microtype}
\usepackage[colorlinks=true,urlcolor=airforceblue,citecolor=airforceblue,linkcolor=airforceblue,linktocpage,pdfpagelabels,bookmarksnumbered,bookmarksopen]{hyperref}
\definecolor{airforceblue}{rgb}{0.36, 0.54, 0.66}
\usepackage[english]{babel}
\usepackage[left=3cm,right=3cm,top=2.8cm,bottom=2.8cm]{geometry}

\numberwithin{equation}{section}

\newtheorem{theorem}{Theorem}[section]
\theoremstyle{plain}
\newtheorem{lemma}[theorem]{Lemma}
\theoremstyle{plain}
\newtheorem{proposition}[theorem]{Proposition}
\theoremstyle{plain}
\newtheorem{corollary}[theorem]{Corollary}

\theoremstyle{definition}
\newtheorem{remark}[theorem]{Remark}

\newcommand{\N}{{\mathbb N}}

\newcommand{\R}{{\mathbb R}}
\renewcommand{\H}{{\mathbb H}}
\renewcommand{\S}{{\mathbb S}}
\newcommand{\eps}{\varepsilon}
\newcommand{\beq}{\begin{equation}}
\newcommand{\eeq}{\end{equation}}
\renewcommand{\le}{\leqslant}
\renewcommand{\ge}{\geqslant}

\newcommand{\cal}{\mathcal}

\def\Xint#1{\mathchoice
{\XXint\displaystyle\textstyle{#1}}%
{\XXint\textstyle\scriptstyle{#1}}%
{\XXint\scriptstyle\scriptscriptstyle{#1}}%
{\XXint\scriptscriptstyle\scriptscriptstyle{#1}}%
\!\int}
\def\XXint#1#2#3{{\setbox0=\hbox{$#1{#2#3}{\int}$ }
\vcenter{\hbox{$#2#3$ }}\kern-.6\wd0}}

\def\dashint{\Xint-}

\title[Liouville theorems for ancient caloric funtions]{Liouville theorems for ancient caloric functions via optimal growth conditions}

\author[S.\ Mosconi]{Sunra Mosconi}

\address[S.\ Mosconi]{Dipartimento di Matematica e Informatica
\newline\indent
Universit\`a degli Studi di Catania
\newline\indent
Viale A.\ Doria 6, 95125 Catania, Italy}
\email{mosconi@dmi.unict.it}

\subjclass[2010]{}
\keywords{Ricci curvature, Riemannian manifold, Liouville theorems, Choquet theory, Heat equation}

\begin{document}

\begin{abstract}
We provide some Liouville theorems for ancient nonnegative solutions of the heat equation on a complete non-compact Riemannian manifold with Ricci curvature bounded from below. We determine growth conditions ensuring triviality of the latters, showing their optimality through examples. 
\end{abstract}

\maketitle

\section{Introduction}\label{sec1}
Two instances of Liouville's type theorems are the following well known result:
\begin{enumerate}
\item
If $u$ is harmonic on $\R^{N}$ and is bounded from below, then it is constant.
\item
If $u$ is harmonic on $\R^{N}$ and grows sublinearly at infinity, then it is constant.  
\end{enumerate}
The first statement follows from the Harnack inequality, while the second one from gradient estimates for harmonic functions. These kind of results received ever increasing attention in the last decades, with generalizations to other  partial differential equations, Riemannian manifolds under Ricci curvature lower bounds (see the recent survey \cite{CM}), or even to metric measure spaces . In this paper, motivated by \cite{SZ, LZ}, we investigate various form of Liouville theorems for the {\em heat equation} on a {\em Riemannian manifold}, with the aim to find optimal growth/bound conditions ensuring triviality of its solutions. In the following, we refer to these kind of conditions as {\em Liouville properties}: thus, a one-sided bound is a Liouville property for the Laplacian on $\R^{N}$, as well as sublinearity.

In the parabolic setting, Liouville properties are more subtle. On the one hand, solving the Cauchy problem in $\R^{N}$ with initial datum in $C^{\infty}_{c}(\R^{N})$ shows that no global growth/bound condition can ensure triviality of a solution $u\in C^{\infty}(\R^{N}\times \, ]0, +\infty[)$. One can seek for a  (sub-exponential) {\em decay} at infinity as a Liouville property, but we won't pursue this investigation here.  Instead, we identify the lack of a strong Liouville property with the lower boundendess in time of the domain. Indeed, an immediate byproduct of the parabolic Harnack inequality is the constancy of any bounded solution of the heat equation on $\R^{N}\times \ ]-\infty, 0[$. Solutions of the heat equation defined in $]-\infty, T[$ are called {\em ancient} if $T<+\infty$, {\em eternal} if $T=+\infty$, while nonnegative (or, for all that matter, bounded from below) ones are called {\em caloric}. To explore further the classical case of $\R^{N}$ we consider the two main examples of ancient (actually, eternal) solutions, namely
\begin{equation}
\label{examples}
u(x, t):=e^{x_{N}+t}, \qquad v(x, t)=e^{-t}\cos x_{N},\qquad x=(x_{1}, \dots, x_{N})\in \R^{N}
\end{equation}
The first example shows that, even in the setting of eternal solutions, non-negativity is {\em not} a Liouville property. The second shows that  boundedness {\em at fixed time} also fails to be a Liouville property for the heat equation on $\R^{N}$. The best  parabolic Liouville  theorem in $\R^{N}$ dates back to Hirschman \cite{H52} (see also \cite{W}). We state it and give a modern proof taken from \cite{DMV}. 
\begin{theorem}[Hirshman]\label{Hth}
Let $u$ be an ancient caloric function on $\R^{N}\times \ ]-\infty, T_{0}[$ such that  for a fixed time $t_{0}<T_{0}$ it holds
\[
u(x, t_{0})\leq e^{o(|x|)},\qquad \text{for $|x|\to +\infty$}
\]
Then, $u$ is constant.
\end{theorem}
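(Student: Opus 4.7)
The plan is to exploit the classical integral representation of nonnegative ancient solutions of the heat equation on $\R^N$. Since in this paper ``caloric'' allows for boundedness from below, by replacing $u$ with $u-\inf u$ we may assume $u\ge 0$; a time translation further reduces to $T_0=0$.

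First I would invoke the Widder-type representation theorem: every nonnegative ancient solution of the heat equation on $\R^N\times(-\infty,0)$ can be written uniquely as
\[
u(x,t)=\int_{\R^N} e^{\xi\cdot x+|\xi|^2 t}\, d\mu(\xi),
\]
for some positive Radon measure $\mu$ on $\R^N$. This identifies the extremal rays of the convex cone of such solutions as the pure exponentials $(x,t)\mapsto e^{\xi\cdot x+|\xi|^2 t}$, and arises naturally through Choquet's theorem (consistent with the mention of Choquet theory among the keywords).

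Next I would use the growth hypothesis at $t=t_0$ to force $\mathrm{supp}(\mu)\subseteq\{0\}$. Suppose for contradiction that some $\xi_0\in\mathrm{supp}(\mu)$ has $\rho:=|\xi_0|>0$, and pick $\delta:=\rho/4$, so that $\mu(B_\delta(\xi_0))>0$. Choosing $x_R:=R\,\xi_0/\rho$ gives $\xi\cdot x_R\ge (\rho-\delta)R$ for every $\xi\in B_\delta(\xi_0)$, whence
\[
u(x_R,t_0)\ge\int_{B_\delta(\xi_0)} e^{\xi\cdot x_R+|\xi|^2 t_0}\, d\mu(\xi)\ge c\, e^{(\rho-\delta)R},
\]
which contradicts $u(x,t_0)\le e^{o(|x|)}$ as $R\to+\infty$. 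Therefore $\mu=c\delta_0$ and $u\equiv c$. Note how this argument is sharp: the example $e^{x_N+t}$ in \eqref{examples} corresponds to $\mu=\delta_{e_N}$, so its growth rate at every fixed time is precisely the borderline $e^{|x|}$.

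The main obstacle is the representation theorem itself. In the Choquet framework one normalizes the cone of nonnegative ancient solutions (say by $u(0,0)=1$) to a convex subset that is compact in the topology of uniform convergence on compacta (via parabolic regularity and Harnack bounds), and characterizes its extreme points as exactly the exponentials $e^{\xi\cdot x+|\xi|^2 t}$, $\xi\in\R^N$. The characterization of the extremals is the delicate step, classically following Widder's analysis; once it is in place, the Liouville property reduces to the elementary measure-theoretic bound above.
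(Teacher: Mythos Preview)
Your proposal is correct. Both your argument and the paper's proof start from the Widder representation $u(x,t)=\int_{\R^N} e^{\xi\cdot x+|\xi|^2 t}\,d\mu(\xi)$ and then force $\mathrm{supp}(\mu)\subseteq\{0\}$, but they do so differently. The paper applies H\"older's inequality in the $\xi$-integral to deduce that $x\mapsto\log u(x,t_0)$ is convex; convexity plus sublinearity makes this function constant, and then differentiating under the integral sign annihilates all polynomial moments of $e^{|\xi|^2 t_0}\mu$, so Stone--Weierstrass forces $\mathrm{supp}(\mu)=\{0\}$. Your route is more direct: you test the representation along the ray $x_R=R\,\xi_0/|\xi_0|$ pointing at a putative $\xi_0\ne 0$ in the support and read off growth of order $e^{c|\xi_0|\,R}$, which immediately contradicts the sub-exponential hypothesis. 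This bypasses the log-convexity trick and Stone--Weierstrass entirely, at the small cost of being less ``structural'' (the paper's argument shows $u(\cdot,t_0)$ is identically constant before even identifying $\mu$). Either way the substantive ingredient is the representation theorem itself, as you rightly emphasize.
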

 
\begin{proof} We can assume that $u>0$. By the Widder representation for ancient positive solutions (see \cite{LZ}), there exists a non-negative Borel measure $\mu$ such that  
\begin{equation}
\label{W}
u(x, t)=\int_{\R^{N}} e^{x\cdot \xi+t|\xi|^{2}}\, d\mu(\xi).
\end{equation}
By H\"older inequality with respect to the measure $\nu:=e^{t_{0}|\xi|^{2}}\mu$ 
\[
\begin{split}
u(sx+(1-s)y, t_{0})&=\int_{\R^{N}} e^{(sx+(1-s)y)\cdot \xi}\, d\nu(\xi)\le \left(\int_{\R^{N}} e^{x\cdot \xi}\, d\nu(\xi)\right)^{s}\left(\int_{\R^{N}} e^{y\cdot \xi}\, d\nu(\xi)\right)^{1-s}\\
&=u^{s}(x, t_{0})\, u^{1-s}(y, t_{0}),
\end{split}
\]
 for all $s\in\, ]0, 1[$. Therefore,  $x\mapsto \log u(x, t_{0})$ is convex, and being sublinear by assumption, it must be constant. Thus $u(x, t_{0})\equiv c$ and differentiating under the integral sign \eqref{W}, we obtain
\[
0=\left.P(D_{x})u(x, t_{0})\right|_{x=0}=\int_{\R^{N}} P(\xi)\, d\nu(\xi)
\]
for any polinomial $P$ such that $P(0)=0$. By the Stone-Weierestrass and Riesz representation theorems, this implies that ${\rm supp}(\nu)=\{0\}$ and thus $\mu=c\,\delta_{0}$ for some $c\in \R$. Inserting the latter into \eqref{W} gives the claim. 
\end{proof}

The two examples in \eqref{examples} show that the assumption in the previous Liouville theorem are optimal.

The picture becomes more involved if we substitute $\R^{N}$ with a general Riemannian manifold. Indeed, if $\H_{N}$ denotes the real hyperbolic space of dimension $N$, there are plenty of bounded harmonic functions\footnote{Indeed, classical harmonics  on $B_{1}$ are also harmonic on $\H_{N}$ identified with the Poincar\'e disc.}, which are also eternal solutions of $\partial_{t}-\Delta=0$. It turns out, however, that this issue can only appear in negative curvature and the following is the more general Liouville type theorem in the Riemannian framework up to now.

\begin{theorem}[Souplet-Zhang \cite{SZ}]\label{SZth}
Let $M$ be a complete  Riemannian manifold with non-negative Ricci curvature, $p\in M$ and ${\rm d}$ be the metric distance. Any ancient caloric $u$ such that $u(x, t)\le e^{o({\rm d}(x, p)+\sqrt{-t})}$ for ${\rm d}(x, p), -t\to +\infty$,  is constant.
\end{theorem}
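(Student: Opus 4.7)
The plan is to deduce Theorem \ref{SZth} from a local parabolic gradient estimate of Hamilton type for positive solutions, which will play the role that the Widder representation plays in Theorem \ref{Hth}. As a preliminary reduction, I would assume $u>0$: if $u$ is merely nonnegative, either the strong maximum principle produces $u>0$ on the whole parabolic domain, or one applies the argument to $u+\eps$ and then sends $\eps\to 0^{+}$.

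The core ingredient is the Souplet--Zhang logarithmic gradient estimate: on a complete Riemannian manifold with $\mathrm{Ric}\ge 0$, any positive solution of the heat equation on the parabolic cylinder $Q_{R}:=B(p,2R)\times[t_{0}-R^{2},t_{0}]$ satisfies
\begin{equation}\label{gradest}
\frac{|\nabla u|(x,t)}{u(x,t)}\le \frac{C}{R}\left(1+\log\frac{A_{R}}{u(x,t)}\right)\qquad\text{on }B(p,R)\times\bigl[t_{0}-R^{2}/4,\,t_{0}\bigr],
\end{equation}
where $A_{R}:=\sup_{Q_{R}}u$ and $C$ depends only on $\dim M$. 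This is the step I expect to be the main obstacle. It is proved by a parabolic Bernstein argument applied to $|\nabla \log(A_{R}/u)|^{2}/\log(A_{R}/u)$ against a space-time cut-off $\phi({\rm d}(\cdot,p))\,\eta(t)$, using the Bochner identity (where $\mathrm{Ric}\ge 0$ enters with the right sign) together with the Laplacian comparison theorem to bound $\Delta\phi$; the standard Calabi device handles the fact that $\phi({\rm d}(\cdot,p))$ is only Lipschitz across the cut locus. The delicate point is to combine these ingredients so that the first-order terms generated by the cut-off are absorbed into the good second-order term coming from $|\nabla^{2}\log u|^{2}$.

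With \eqref{gradest} in hand, the conclusion is quick. Fix $(x_{0},t_{0})\in M\times\R$ and $\delta>0$. The growth hypothesis gives $R_{\delta}$ such that $\log u(x,t)\le \delta\bigl({\rm d}(x,p)+\sqrt{-t}\bigr)$ whenever ${\rm d}(x,p)+\sqrt{-t}\ge R_{\delta}$; thus on $Q_{R}$ with $R\ge 2{\rm d}(x_{0},p)+R_{\delta}$, since ${\rm d}(x,p)\le 2R$ and $\sqrt{-t}\le R(1+o(1))$, one has $\log A_{R}\le 3\delta R +O(1)$ as $R\to+\infty$. Evaluating \eqref{gradest} at $(x_{0},t_{0})$ then yields
\[
\frac{|\nabla u|(x_{0},t_{0})}{u(x_{0},t_{0})}\le \frac{C}{R}\bigl(1+3\delta R+O(1)-\log u(x_{0},t_{0})\bigr)\xrightarrow[R\to+\infty]{}3C\delta,
\]
and letting $\delta\to 0^{+}$ gives $\nabla u(x_{0},t_{0})=0$. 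Hence $u(\cdot,t)$ is constant on $M$ for every $t$, and substituting $u(x,t)=f(t)$ in the heat equation gives $f'\equiv 0$, so $u$ is constant.
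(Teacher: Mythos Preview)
Your proposal is correct and takes essentially the same route the paper indicates: the paper does not give a detailed proof of Theorem~\ref{SZth} but states that it is ``an immediate consequence'' of the localized Souplet--Zhang gradient estimate \eqref{szin}, and your argument simply spells out this deduction (applying \eqref{szin} with $K=0$, $T\sim R^{2}$, letting $R\to+\infty$ and then $\delta\to 0^{+}$). The only cosmetic difference is that you quote the estimate in the form $|\nabla\log u|\le CR^{-1}(1+\log(A_{R}/u))$ rather than the squared version \eqref{szin}, which is immaterial.
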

This result was actually an immediate consequence of a new gradient estimate for positive  solutions $u$ of the heat equation in $Q_{R, T}:=B_{R}(p)\times [-T, 0]$, $p\in M$. If ${\rm Ric}_{M}\ge -K\, g$ for some $K\ge 0$, then the gradient estimate of \cite{SZ} states that 
\begin{equation}
\label{szin}
|\nabla \log u|^{2}\le C_{N}\left(K+\frac{1}{R^{2}}+\frac{1}{T}\right)\log^{2} (S/u),\qquad S=\sup_{Q_{R, T}}u
\end{equation}
holds in $Q_{R/2, T/2}$ for any $u$ as above. The previous inequality falls into the wider framework of {\em parabolic gradient estimates} such as the celebrated Li-Yau's one \cite{LY}
\begin{equation}
\label{lyin}
|\nabla \log u|^{2}\le \partial_{t}\log u+ C_{N}\left(K+\frac{1}{R^{2}}+\frac{1}{T}\right)
\end{equation}
or the Hamilton inequality \cite{H93} (generalized by Kotschvar \cite{K} to the non-compact case)
\begin{equation}
\label{hkin}
|\nabla \log u|^{2}\le C_{N}\left(K+\frac{1}{T}\right)\log(S/u), \qquad S=\sup_{M\times \, [-T, 0]}u.
\end{equation}
Notice that letting $T\to +\infty$ into this last inequality immediately gives  a Liouville theorem for {\em bounded} ancient caloric functions in the case $K=0$,
but it is only its localized counterpart \eqref{szin} which provides the much weaker sub-exponential growth condition $u\le e^{o({\rm d}(x, p)+\sqrt{-t})}$ as a Liouville property.

Theorem \ref{SZth} has been generalized (with the same growth condition) in various directions, see e.g. \cite{Huang} and the bibliography therein. The main result of this note is the following improvement of Theorem \ref{SZth}.

\begin{theorem}\label{mainth}
Let $M$ be a complete  Riemannian manifold with Ricci curvature bounded from below by $-K\le 0$,  $p\in M$ and $u$ be an ancient caloric function. 
\begin{enumerate}
\item
If $K=0$ and $u(x, t_{0})\le e^{o({\rm d}(x, p))}$ for ${\rm d}(x, p)\to +\infty$ at some fixed $t_{0}$,  $u$ is constant.
\item
If $K>0$ and $u(x, t)\le e^{o({\rm d}(x, p)-t)}$ for ${\rm d}(x, p) -t\to +\infty$,  $u$ is stationary (and hence harmonic).
\end{enumerate}
\end{theorem}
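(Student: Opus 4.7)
The strategy is to mimic Hirschman's Euclidean argument of Theorem~\ref{Hth}, replacing the explicit Widder representation \eqref{W} with an abstract integral representation obtained via Choquet's theorem, applied to the convex set of normalized positive ancient caloric functions on $M\times\,]-\infty,T_{0}[$. As in that proof, the strong maximum principle lets us assume $u>0$; the aim is then to write $u$ as an integral over a distinguished family of ``minimal'' positive ancient solutions, structured enough that the growth hypotheses translate into support conditions on the representing measure.

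To set up Choquet, normalize by $u(p,t_{0})=1$ and consider the convex set $\mathcal{C}$ of all such positive ancient caloric functions. The local parabolic Harnack inequality on manifolds with $\mathrm{Ric}\ge -Kg$, together with standard Schauder estimates, makes $\mathcal{C}$ compact and metrizable in the topology of local uniform convergence. Choquet's theorem then represents each $u\in\mathcal{C}$ as the barycenter of a probability measure on the extreme points of $\mathcal{C}$. A spectral decomposition exploiting the time-translation invariance of the heat equation, combined with a Martin boundary analysis of positive $\lambda$-harmonic functions on $M$, identifies the extremals as products $v_{\alpha}(x,t)=e^{\lambda_{\alpha}t}\phi_{\alpha}(x)$, where $\phi_{\alpha}>0$ is minimal among positive solutions of $\Delta\phi=\lambda_{\alpha}\phi$ on $M$ and $\lambda_{\alpha}$ lies in the positive spectrum of $\Delta$, which is bounded below by a constant depending only on $K$. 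This yields
\[
u(x,t)=\int e^{\lambda_{\alpha}t}\phi_{\alpha}(x)\,d\mu(\alpha).
\]

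From the representation, the growth conditions restrict the support of $\mu$. For Part~(i) ($K=0$), under $\mathrm{Ric}\ge 0$ Cheng--Yau's Liouville theorem forces every positive $\phi_\alpha$ with $\lambda_\alpha=0$ to be constant, while positive $\lambda$-harmonic functions with $\lambda_\alpha>0$ must grow exponentially along suitable directions; the subexponential hypothesis $u(x,t_{0})\le e^{o({\rm d}(x,p))}$ then forces $\mu$ to concentrate on $\{\lambda_\alpha=0\}$, giving $u\equiv c$. For Part~(ii) ($K>0$), the joint bound $u\le e^{o({\rm d}(x,p)-t)}$ acts in two directions: fixing $x$ and letting $t\to-\infty$ forces $\lambda_{\alpha}\ge 0$ for $\mu$-a.e.\ $\alpha$, while fixing $t$ and letting ${\rm d}(x,p)\to\infty$, combined with the known curvature-dependent exponential growth rate of any minimal positive $\lambda$-harmonic function with $\lambda>0$, rules out $\lambda_{\alpha}>0$. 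Hence $\mu$ is supported on $\{\lambda_\alpha=0\}$, so $u=\int\phi_{\alpha}\,d\mu$ is time-independent, i.e.\ stationary, and thus harmonic since it is ancient caloric.

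The principal obstacle is the Choquet/Martin identification step: justifying the extremal decomposition on a general non-compact Riemannian manifold requires a parabolic Martin boundary theory on $M\times\,]-\infty,T_{0}[$, whose spectral-by-time-translation flavor is natural but technically delicate in the noncompact setting. A secondary but important difficulty is that, unlike in $\R^{N}$ where $e^{\xi\cdot x}$ is exactly log-affine along straight lines (so that Hirschman's H\"older step yields exact log-convexity of $u(\cdot,t_{0})$), the minimal eigenfunctions $\phi_{\alpha}$ on a curved $M$ fail to be log-affine along geodesics; one must replace strict convexity by sharp Cheng--Yau-type gradient estimates on $\phi_{\alpha}$ in order to turn the global growth of $u$ into sharp growth constraints on each extremal $\phi_{\alpha}$, and to close the argument.
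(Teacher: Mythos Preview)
Your overall strategy matches the paper's exactly: Choquet representation $u(x,t)=\int e^{\lambda t}w_\lambda(x)\,d\nu$ via Proposition~\ref{extremal} and Lemma~\ref{lemma1}, then force $\mathrm{supp}(\nu)=\{0\}$ from the growth hypotheses. You also correctly flag the real difficulty: the subexponential bound on $u$ does \emph{not} immediately bound any individual $w_\lambda$, since different eigenfunctions may peak along different directions on $M$. But you do not resolve this, and invoking ``Cheng--Yau-type gradient estimates'' alone does not close the gap.

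The paper's mechanism is as follows. The Borb\'ely/Yau gradient bound (Proposition~\ref{borb}) makes $\{\log w_\lambda:\lambda\in[a,b]\}$ \emph{equi-Lipschitz}, so pointwise and locally uniform convergence coincide on this family and $\lambda\mapsto\log w_\lambda$ is Borel into a separable metric space. Lusin's theorem then yields a compact $K\subset[a,b]$ with $\nu(K)>0$ on which $\lambda\mapsto\log w_\lambda$ is continuous for the metric \eqref{metric}. Picking a density point $\lambda_0\in K$, continuity gives $\log w_{\lambda_0}\le\log w_\lambda+\eps\, n$ on $B_n(p)$ for all $\lambda$ in a small $K$-neighbourhood of $\lambda_0$; averaging in $\lambda$ over that neighbourhood and applying Jensen's inequality produces
\[
\log w_{\lambda_0}\le \log u(\cdot,t_0)+C_\eps+\eps\, n\quad\text{on }B_n(p).
\]
This transfers the subexponential growth of $u$ to a \emph{single} eigenfunction $w_{\lambda_0}$, which then contradicts the lower bound $\liminf_{r}\sup_{\partial B_r(p)}(\log w_{\lambda_0})/r\ge\chi_{\lambda_0}>0$ established in the Corollary following Lemma~\ref{lemma2}. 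This Lusin--Jensen step is the heart of the argument and is missing from your plan; the lower growth bound itself (built by Laplacian comparison with explicit radial eigenfunctions on the model space $\H_N$ or $\R^N$) is likewise glossed over. By contrast, the parabolic Martin/extremal identification you label the ``principal obstacle'' is classical (Kor\'anyi--Taylor, Pinchover) and is dispatched in the paper in one line; also note that after disintegrating by $\lambda$ one does not need the $w_\lambda$ to be minimal, only positive $\lambda$-eigenfunctions.
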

Let us make some comments on the result. The case $K=0$ fixes the gap between the Euclidean Liouville Theorem \ref{Hth} and the Riemannian Liouville Theorem \ref{SZth}, providing an {\em optimal} parabolic Liouville property in the case ${\rm Ric}_{M}\ge 0$ more in the spirit of \cite{KPP}. It is worth noting that  an argument employing \eqref{lyin} in conjunction with \eqref{szin} would give the same result; however, we will proceed in a softer way via a Choquet representation for ancient solutions (see Lemma \ref{lemma1} below). This method gives us the opportunity to treat the case of negative Ricci curvature lower bounds as well. The case  $M=\H_{N}$ discussed above shows that our second Liouville statement is the best one can get through a growth condition. We will actually prove a slightly more general statement in the case of negative Ricci curvaure bound and refer to Remark \ref{finalR} for a description. 
\smallskip

As a final remark, Liouville properties for ancient caloric functions in the metric measure setting of ${\rm RCD}^{*}(K, N)$ spaces can probably be obtained through the same techniques described here, providing a generalization of the $RCD^{*}(K, N)$ counterpart of  Theorem \ref{SZth}. The latter has been proved in the metric measure setting in \cite{Huang} through a gradient estimate of the form \eqref{szin}, but the more general statement of Theorem \ref{mainth} in the ${\rm RCD}^{*}$ framework can be cooked up via the same ingredients: the granted linearity of the Laplacian is essential in order to apply Choquet theory, the parabolic Harnack inequality holds true since ${\rm RCD}^{*}(K, N)$ verifies doubling and Poincar\'e, while the relevant Laplacian comparison and comparison principles can be found in \cite{GM}. The only additionally needed result is a gradient estimate for eigenfunctions of Yau's type (see Proposition \ref{borb} below), which follows from the metric version of the parabolic Li-Yau inequality proved in \cite{ZZ}.

\section{Proof of the main result}

By a time translation will always work with caloric functions on $M\times \, ]-\infty, 1[$. If $u$ is ancient and caloric (i.\,e., a non-negative ancient solution of the heat equation) then the local Harnack inequality shows that if $u(x_{0}, t_{0})=0$, then $u$ vanishes identically on $M\times \, ]-\infty, t_{0}]$. Since we are supposing that ${\rm Ric}_{M}$ is bounded from below, uniqueness of the non-negative Cauchy problem holds (see e.\,g. \cite{M} and the references therein), and therefore any nontrivial caloric function is strictly positive.  Let ${\cal C}$ be the cone of caloric functions on $M\times \, ]-\infty, 1[$. We say that $u\in {\cal C}$ is {\em minimal}, and write $u\in {\rm Ext}({\cal C})$, if 
\[
v\in {\cal C}\quad \text{and}\quad v\le u\quad \Rightarrow\quad v=k\, u \quad \text{for some $k\in \R$}.
\]
The following result is basically contained in \cite{KT, P}, see also \cite[Remark 2.3]{LZ}. 
\begin{proposition}[Extremal caloric functions]\label{extremal}
Let $M$ be a complete Riemannian manifold with Ricci curvature bounded from below. If $v\in {\rm Ext}({\cal C})$ then 
there exists $\lambda\in \R$ and $w\in C^{\infty}(M)$ solving $\Delta w=\lambda\, w$ such that $v(x, t)=e^{\lambda\, t}\, w(x)$.
\end{proposition}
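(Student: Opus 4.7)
The plan is to exploit the minimality of $v$ by showing that its time-shifts, which still lie in ${\cal C}$, are dominated by multiples of $v$ and hence, by extremality, proportional to $v$; the proportionality scalar will turn out to be a continuous multiplicative semigroup, necessarily exponential, which forces the separated form.

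For $\tau>0$ I would define $(T_\tau v)(x,t) := v(x, t-\tau)$. Since $v$ is caloric on $M\times\,]-\infty,1[$, the translate $T_\tau v$ is caloric on $M\times\,]-\infty,1+\tau[$ and a fortiori lies in ${\cal C}$. The crucial bound is
\[
T_\tau v \le C(K,\tau)\, v \qquad \text{on } M\times\,]-\infty,1[,
\]
which I would extract from the Li--Yau inequality \eqref{lyin}. Its localized form on the cylinder $Q_{R,T}$ holds on every complete Riemannian manifold with ${\rm Ric}_M\ge -Kg$, and because $v$ is defined throughout the ancient strip one may send $R,T\to+\infty$ to obtain $|\nabla\log v|^2-\partial_t\log v\le C_N K$ globally. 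Integrating the consequence $\partial_t\log v\ge -C_N K$ in time from $t-\tau$ to $t$ yields the bound with $C(K,\tau)=e^{C_N K\tau}$. Applying extremality of $v$ to $C(K,\tau)^{-1} T_\tau v \in {\cal C}$ then forces $T_\tau v = c(\tau)\, v$ for some scalar $c(\tau)>0$.

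The remainder is a clean semigroup argument. The identity $T_{\tau_1}\circ T_{\tau_2}=T_{\tau_1+\tau_2}$ yields $c(\tau_1+\tau_2)=c(\tau_1)\,c(\tau_2)$, and writing $c(\tau)=v(x_0,-\tau)/v(x_0,0)$ at any fixed base point $x_0$ shows that $c$ is continuous in $\tau$. Hence $c(\tau)=e^{-\lambda\tau}$ for a unique $\lambda\in\R$, and the identity $v(x,t-\tau)=e^{-\lambda\tau}\, v(x,t)$ shows that $v(x,t)\,e^{-\lambda t}$ is independent of $t<1$. Denoting its common value by $w(x)$ gives $v(x,t)=e^{\lambda t}\, w(x)$ on the whole domain; parabolic regularity of $v$ yields $w\in C^\infty(M)$, and substitution into $\partial_t v=\Delta v$ delivers $\Delta w=\lambda w$.

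The main obstacle is the domination step: without a bound of the form $T_\tau v \le C\, v$, extremality cannot be brought to bear, and this is the single place where both the geometric hypothesis ${\rm Ric}_M\ge -Kg$ and the global-in-time nature of the domain really enter. The natural tool is the global Li--Yau estimate, as outlined above. Strict positivity of $v$, required to make $c(\tau)$ well-defined and positive, is already supplied by the Harnack-based observation recalled just before the statement. An alternative route via the global parabolic Harnack principle would also work, but Li--Yau produces the explicit constant $e^{C_N K\tau}$ that dovetails neatly with the exponential form of $c(\tau)$ derived in the second step.
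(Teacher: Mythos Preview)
Your argument is correct and is essentially the classical one the paper defers to via the citations \cite{KT, P}: the paper gives no proof of its own for Proposition~\ref{extremal}, simply remarking that the result is ``basically contained in'' those references. The time-shift plus extremality plus semigroup route you describe is precisely the mechanism of Kor\'anyi--Taylor and Pinchover; the only place one has any freedom is in how the domination $T_\tau v\le C(K,\tau)\,v$ is obtained, and your choice of the global Li--Yau differential Harnack (sending $R,T\to\infty$, which is legitimate because $v$ is ancient) is the cleanest option under a Ricci lower bound. One could equally well invoke the integrated Li--Yau Harnack inequality directly, or the local parabolic Harnack inequality iterated along a chain, but these lead to the same conclusion with no real gain.
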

Regarding eigenfunctions, we recall the following a-priori bound of \cite{B}, which is a refinement of the classical Yau's gradient estimate in \cite{Yau}.

\begin{proposition}[Gradient bound for eigenfunctions]\label{borb}
Let $(M, g)$ be a complete $N$-dimensional Riemannian manifold with ${\rm Ric}_{M}\ge -(N-1)\, \kappa\, g$ for $\kappa\ge 0$ and $w$ a positive $\lambda$-eigenfunction. Then $\lambda\ge -(N-1)^{2}\, \kappa/4$ and 
\[
|\nabla \log w|\le \frac{N-1}{2}\left( \sqrt{\kappa +\frac{4\,\lambda}{(N-1)^{2}}}+\sqrt{\kappa}\right)
\] 

\end{proposition}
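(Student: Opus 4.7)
The plan is to apply Bochner's identity to $f := \log w$, to extract a sharpened Hessian-squared estimate at a critical point of $Q := |\nabla f|^{2}$, and then to localise by a Li-Yau type cutoff so as to treat the non-compact case. As a preliminary computation, from $\Delta w = \lambda w$ and $w>0$ one obtains
$$\Delta f + |\nabla f|^{2} = \lambda,$$
so that Bochner's formula, combined with $\nabla \Delta f = -\nabla Q$ and ${\rm Ric}_{M}\ge -(N-1)\kappa\, g$, yields the differential inequality
$$\tfrac{1}{2}\Delta Q + \langle \nabla f,\nabla Q\rangle \ge |{\rm Hess}\,f|^{2} - (N-1)\kappa\, Q.$$

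The refinement of Yau's classical estimate hinges on a pointwise observation. At an interior critical point $x_{0}$ of $Q$ with $\nabla f(x_{0})\neq 0$, the identity $\tfrac{1}{2}\nabla Q = {\rm Hess}\,f\cdot \nabla f$ forces $\nabla f/|\nabla f|$ to lie in the kernel of ${\rm Hess}\,f(x_{0})$; diagonalising in an orthonormal frame adapted to $\nabla f$, the trace $\Delta f(x_{0})$ reduces to a sum of only $N-1$ eigenvalues, and the Cauchy-Schwarz inequality gives
$$|{\rm Hess}\,f(x_{0})|^{2} \ge \frac{(\Delta f(x_{0}))^{2}}{N-1} = \frac{(\lambda - Q(x_{0}))^{2}}{N-1},$$
improving the usual bound $|{\rm Hess}\,f|^{2}\ge (\Delta f)^{2}/N$. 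Pretending first that $M$ is compact, plugging this into the Bochner inequality at the maximum of $Q$ yields $(\lambda - Q^{*})^{2} \le (N-1)^{2}\kappa\, Q^{*}$ with $Q^{*}:=\max_{M} Q$. Reading this as a quadratic inequality in $\sqrt{Q^{*}}$ gives the displayed bound on $|\nabla\log w|$, while non-negativity of the relevant discriminant forces $\lambda \ge -(N-1)^{2}\kappa/4$; the degenerate case $Q\equiv 0$ means $w$ is constant, whence $\lambda = 0$ and the claim is trivial.

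In the genuinely non-compact setting, the argument is localised through a standard cutoff $\phi$ supported in $B_{2R}(p)$ with $\phi\equiv 1$ on $B_{R}(p)$, using the Laplacian comparison (available precisely under the Ricci lower bound) to estimate $\Delta \phi = O(R^{-2}+R^{-1}\sqrt{\kappa})$. Evaluating the Bochner inequality at the maximum of $\phi^{2}Q$ in place of $Q$ itself produces correction terms of order $R^{-1}$ that vanish as $R\to +\infty$. The main obstacle is to make this localisation compatible with the sharp Hessian estimate above: at a maximum of $\phi^{2}Q$ the direction $\nabla f$ is no longer exactly an eigendirection of ${\rm Hess}\,f$ but only an approximate one, and the Cauchy-Schwarz step has to be carried out with enough care to absorb the resulting $O(R^{-1})$ errors without damaging the sharp constant $(N-1)^{2}$ in the final inequality.
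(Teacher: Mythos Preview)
The paper does not actually prove this proposition: it is quoted from Borb\'ely \cite{B} (the text says ``we recall the following a-priori bound of \cite{B}, which is a refinement of the classical Yau's gradient estimate''), so there is no in-paper argument to compare against.

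That said, your plan is the correct one and is essentially the strategy of \cite{B}. The key refinement over Yau's original estimate is exactly the observation you isolate: at a critical point of $Q=|\nabla f|^{2}$, the direction $\nabla f/|\nabla f|$ is annihilated by ${\rm Hess}\,f$, so Cauchy--Schwarz on the trace runs over $N-1$ eigenvalues instead of $N$, producing the sharp constant $(N-1)^{2}$ in the final quadratic. Your algebra is also right: from $(\lambda-Q^{*})^{2}\le (N-1)^{2}\kappa\,Q^{*}$ one gets $|\lambda - s^{2}|\le (N-1)\sqrt{\kappa}\,s$ with $s=\sqrt{Q^{*}}$, and in the relevant branch $s^{2}\ge \lambda$ this solves to the displayed bound, while existence of a real $s$ forces $\lambda\ge -(N-1)^{2}\kappa/4$.

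The only point that is genuinely delicate, and which you flag but do not carry out, is the localisation. At a maximum of $\phi^{2}Q$ one has $\nabla Q=-2Q\,\phi^{-1}\nabla\phi$, so ${\rm Hess}\,f\cdot\nabla f$ is only $O(R^{-1}Q)$ rather than zero; the first row and column of the Hessian in the adapted frame therefore contribute $O(R^{-1})$ terms that must be tracked through the Cauchy--Schwarz step without degrading the $1/(N-1)$ factor. This can be done (Borb\'ely does it), but it requires writing $|{\rm Hess}\,f|^{2}\ge f_{11}^{2}+2\sum_{i\ge 2}f_{1i}^{2}+\frac{1}{N-1}(\Delta f-f_{11})^{2}$ and controlling $f_{11}$, $f_{1i}$ via the cutoff estimates rather than simply dropping them. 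Your outline is accurate, but in a full proof this is where the work lies.
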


In particular, by Yau's elliptic Liouville theorem, if ${\rm Ric}_{M}\ge 0$, positive nontrivial solutions of $\Delta w=\lambda\, w$ exist only for $\lambda>0$. 

\begin{lemma}\label{lemma1}
Let $M$ be a complete Riemannian manifold with Ricci curvature bounded from below and $u\in {\cal C}$. There exists a probability measure $\nu$ on $\R$ such that 
\begin{equation}
\label{repre}
u(x, t)=\int_{\R} e^{\lambda\, t}\, w_{\lambda}(x)\, d\nu
\end{equation}
where, for $\nu$ a.\,e. $\lambda$, $w_{\lambda}$ is a positive solution of $\Delta w=\lambda\, w$.
\end{lemma}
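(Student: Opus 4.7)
The plan is to obtain the representation by applying Choquet's integral representation theorem to the convex cone $\mathcal{C}$. Proposition \ref{extremal} already identifies its extreme rays with functions of the form $e^{\lambda t}w(x)$, where $w$ is a positive $\lambda$-eigenfunction. So the task reduces to (i) realising $\mathcal{C}$ as a cone over a compact, metrizable, convex base in a locally convex topological vector space, and (ii) translating the resulting representing probability measure on the set of extreme points into a measure on the eigenvalue parameter $\lambda\in\mathbb{R}$.

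Fix $p\in M$ and set $B:=\{u\in\mathcal{C}:u(p,0)=1\}$, viewed as a subset of $C(M\times\,]-\infty,1[)$ with the compact-open topology. Since any nontrivial caloric function is strictly positive (as recalled at the start of this section), every nontrivial $u\in\mathcal{C}$ has $u(p,0)>0$ and one may assume $u\in B$, rescaling at the end. Convexity and closedness of $B$ are obvious, metrizability follows from second countability of the underlying space, and compactness is the only substantive point: the parabolic Harnack inequality, valid under the Ricci curvature lower bound, yields uniform two-sided bounds on every $u\in B$ over compact subsets of $M\times\,]-\infty,1[$; parabolic Schauder estimates upgrade these to uniform local $C^k$ bounds, and Arzel\`a--Ascoli concludes.

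Choquet's theorem then produces a probability measure $\mu$ on $\mathrm{Ext}(B)$ with barycenter $u$. By Proposition \ref{extremal}, every $v\in\mathrm{Ext}(B)$ has the form $v(x,t)=e^{\lambda(v) t}w_v(x)$ with $\Delta w_v=\lambda(v)w_v$, $w_v>0$, $w_v(p)=1$, and the continuous eigenvalue functional $\lambda(v)=\partial_t\log v\,|_{(p,0)}$. Let $\nu:=\lambda_\sharp\mu$ be the push-forward and disintegrate $\mu=\int\mu_\lambda\,d\nu(\lambda)$, with $\mu_\lambda$ concentrated on the fiber $\lambda^{-1}(\{\lambda\})$. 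Define
\[
w_\lambda(x):=\int v(x,0)\,d\mu_\lambda(v).
\]
Fubini's theorem then delivers
\[
u(x,t)=\int_{\mathrm{Ext}(B)}v(x,t)\,d\mu(v)=\int_{\mathbb{R}}e^{\lambda t}w_\lambda(x)\,d\nu(\lambda).
\]

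The main obstacle is verifying that $w_\lambda$ is itself a positive $\lambda$-eigenfunction for $\nu$-a.e.\ $\lambda$. Positivity is immediate because $w_\lambda(p)=\int 1\,d\mu_\lambda=1$. For the eigenfunction equation, one exchanges $\Delta$ with the integral: for each fixed $\lambda$, the family of normalized positive $\lambda$-eigenfunctions enjoys uniform local $C^2$ bounds via the elliptic Harnack inequality and Schauder estimates, which justifies differentiation under $\mu_\lambda$ and gives $\Delta w_\lambda=\int\Delta v(\cdot,0)\,d\mu_\lambda=\lambda\int v(\cdot,0)\,d\mu_\lambda=\lambda w_\lambda$. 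For general nontrivial $u\in\mathcal{C}$, apply the argument to $u/u(p,0)$ and rescale each $w_\lambda$ by the factor $u(p,0)$, which preserves both the positivity and the eigenfunction property of the $w_\lambda$ and leaves $\nu$ a probability measure.
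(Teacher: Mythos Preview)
Your overall architecture---Choquet representation on the extreme rays followed by disintegration over the eigenvalue parameter---is exactly the paper's. The gap is in your compactness claim for the base $B=\{u\in{\cal C}:u(p,0)=1\}$.

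The parabolic Harnack inequality does \emph{not} give uniform two-sided bounds on compact subsets of $M\times\,]-\infty,1[$. Take $M=\R^N$, $p=0$, and $u_\lambda(x,t)=e^{\lambda t+\sqrt{\lambda}\,x_1}$ for $\lambda>0$: each $u_\lambda$ lies in $B$, yet $u_\lambda(0,1/2)=e^{\lambda/2}\to\infty$ and $u_\lambda(e_1,0)=e^{\sqrt{\lambda}}\to\infty$ as $\lambda\to\infty$. Hence $B$ is not even pointwise bounded, and Arzel\`a--Ascoli cannot be invoked. The obstruction is intrinsic to parabolic Harnack, which controls $u(x,t)$ by $u(y,s)$ only for $s>t$: normalizing at $(p,0)$ bounds the solution from above on $\{t<0\}$, but yields no upper bound at $t=0$ away from $p$, nor anywhere on $\{t>0\}$.

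The paper explicitly states that ${\cal C}$ has no compact base and bypasses this by using Choquet's theorem for \emph{well-capped} cones rather than for compact convex sets: one equips ${\cal C}$ with the topology of pointwise convergence, observes it is a closed (hence weakly complete) subcone of $\R^{M\times\,]-\infty,1[}$, and uses Harnack only to show that pointwise convergence on a countable dense set already forces convergence everywhere, giving metrizability and thus the well-capped property. From that point on, the push-forward by $\psi(v)=v(p,0)^{-1}\,\partial_t v(p,0)$ and the disintegration over $\lambda$ proceed essentially as you wrote.
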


\begin{proof}
The cone ${\cal C}$ fails to have a compact base with respect to any useful topology. However, if we equip it with the topology of pointwise convergence, it turns out to be a proper closed a subset of $\R^{M\times\, ]-\infty, 1[}$ and therefore is weakly complete. We claim that ${\cal C}$ is metrizable and hence well-capped in the Choquet sense (see \cite[30.16]{C}). Indeed, let $D\subseteq M\times\, ]-\infty, 1[$ be denumerable  and dense. The local parabolic Harnack inequality implies that the topology of pointwise convergence in $D$ coincides with the pointwise convergence in ${\cal C}\subseteq \R^{M\times \, ]-\infty, 1[}$ (it actually  implies locally uniform convergence). This proves metrizability due to $D$ being denumberable and, even more, that ${\cal C}$ is second countable and thus separable. As a consequence, ${\cal C}$ is a Polish space, and being ${\rm Ext}({\cal C})$ a $G_{\delta}$ subset of ${\cal C}$\footnote{With little effort, it is possible to prove that it is closed.}, it turns out to be Polish as-well.

By Choquet theorem \cite[Theorem 30.22]{C}, any $u\in {\cal C}$ can be represented through a probability measure supported on ${\rm Ext}({\cal C})$, i.e. there exists a probability measure $\mu$ on ${\rm Ext}({\cal C})$ such that for any continuous linear functional $\Lambda$ 
\[
\langle \Lambda, u\rangle=\int_{{\rm Ext}({\cal C})}\langle\Lambda, v\rangle\, d\mu.
\]
Specifying  $\Lambda$ to be the evaluation at $(x, t)\in M\times\, ]-\infty, 1[$, gives
\[
u(x, t)= \int_{{\rm Ext}({\cal C})}v(x, t)\, d\mu\qquad \forall (x, t)\in M\times\, ]-\infty, 1[. 
\]
Let us fix $p\in M$ and observe that the map $\psi: {\rm Ext}({\cal C})\to \R$ between Polish spaces defined as 
\[
\psi(v)=v^{-1}(p, 0)\, \frac{\partial v}{\partial t} (p, 0)
\]
is measurable\footnote{It is actually continuous by local parabolic regularity, but we wont need it.} and thus induces a disintegration of the probability measure $\mu$ into probability measures  $\{\mu_{\lambda}\}_{\lambda}$, Borel measurable with respect to $\lambda$, such that ${\rm supp}(\mu_{\lambda}) \subseteq \psi^{-1}(\lambda)$. In particular, the Disintegration theorem ensures that there exists a probability measure $\nu$ on $\R$ such that
\begin{equation}
\label{repr}
u(x, t)=\int_{\R}\int_{\psi^{-1}(\lambda)} v(x, t)\, d\mu_{\lambda}\, d\nu.
\end{equation}
By Proposition \ref{extremal}, any $v\in {\rm Ext}({\cal C})$ is of the form $v(x, t)=e^{\lambda\, t}\, w(x)$ for some $\lambda\in \R$ and $w\ge 0$ solving $\Delta w=\lambda\, w$, therefore it holds
\begin{equation}
\label{p}
v(x, t)=e^{\lambda\, t}\, v(x, 0),\qquad \psi(v)=\lambda.
\end{equation} 
If ${\cal C}_{\lambda}$ denotes the cone of non-negative solutions to $\Delta w=\lambda\, w$, the latter discussion shows that
\[
\psi^{-1}(\lambda)\subseteq \{v: v(x, t)=e^{\lambda\, t}\, w(x), w\in {\cal C}_{\lambda}\}.
\]
The map $\Phi_{\lambda}:\psi^{-1}(\lambda)\to {\cal C}_{\lambda}$ defined as  $\Phi_{\lambda}(v)(x)=v(x, 0)$
is continuous and induces a push-forward measure $(\Phi_{\lambda})_{*}(\mu_{\lambda})$ on ${\cal C}_{\lambda}$, which we still denote by $\mu_{\lambda}$ by a slight abuse of notation. By construction, it satisfies
\[
\int_{\psi^{-1}(\lambda)} v(x, 0)\, d\mu_{\lambda}=\int_{{\cal C}_{\lambda}}w(x)\, d\mu_{\lambda}=:w_{\lambda}(x)
\]
and we observe that, being $\lambda\mapsto \mu_{\lambda}$ Borel, so is $\lambda\mapsto w_{\lambda}$ and thus $(\lambda, x)\mapsto w_{\lambda}(x)$. 
Using the distributional formulation of the equation $\Delta w=\lambda\, w$, namely
\[
\int_{M}w\, \Delta\varphi\, dg=\lambda\int_{M}w\, \varphi\, dg,\qquad \forall \varphi\in C^{\infty}_{c}(M)
\]
 and Fubini-Tonelli's theorem, it is readily checked that $w_{\lambda}$ is a distributional, and thus classical solution of $\Delta w=\lambda\, w$. Finally, by the first relation in \eqref{p}, 
\[
\int_{\psi^{-1}(\lambda)} v(x, t)\, d\mu_{\lambda}=e^{\lambda\, t}\, w_{\lambda}(x)
\]
and recalling \eqref{repr} completes the proof of \eqref{repre}. Finally, the strong minimum principle ensures that  $w_{\lambda}(x)=0$ for some $x\in M$ implies $w_{\lambda}\equiv 0$, so that it suffices to restrict $\nu$ to the measurable subset $\{w_{\lambda}(p)>0\}$ for a fixed $p$. 
\end{proof}

\begin{lemma}\label{lemma2}
Let $(M, g)$ be a complete  $N$-dimensional Riemannian manifold with ${\rm Ric}_{M}\ge -(N-1)\, \kappa\, g$, $
\kappa\ge 0$ and  $p\in M$. For $\lambda>0$ define 
\begin{equation}
\label{chilambda}
\chi_{\lambda}=\chi_{\lambda}(\kappa, N)=\frac{N-1}{2}\left(\sqrt{\kappa+\frac{4\, \lambda}{(N-1)^{2}}}-\sqrt{\kappa}\right).
\end{equation}
There exists $\bar w_{\lambda}\in {\rm Lip}_{\rm loc}(M)$ such that $\bar w_{\lambda}(p)=1$, $\Delta \bar w_{\lambda}\le \lambda\, \bar w_{\lambda}$ weakly on $M$ and 
\begin{equation}
\label{barr}
\begin{cases}
\bar w_{\lambda}(x)\ge c(N)\, e^{ \chi_{\lambda}\, d(x, p)}&\text{if $\kappa>0$}\\[5pt]
\bar w_{\lambda}(x)\ge c(N, \eps)\, e^{ (\chi_{\lambda}-\eps)\, d(x, p)}&\text{for any $\eps>0$, if $\kappa=0$},
\end{cases}
\end{equation}
for $x\in M$, where $c(N)$ and $c(N, \eps)$ are suitable positive constants.
\end{lemma}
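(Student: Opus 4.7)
The natural candidate for $\bar w_\lambda$ is the pullback via $d(\cdot,p)$ of the radial $\lambda$-eigenfunction on the model space of constant sectional curvature $-\kappa$. Set
\[
\phi(r) = \begin{cases} (N-1)\sqrt{\kappa}\,\coth(\sqrt{\kappa}\,r) & \text{if } \kappa > 0,\\[3pt] (N-1)/r & \text{if } \kappa = 0, \end{cases}
\]
and let $g_\lambda \in C^\infty([0,\infty))$ be the unique smooth even solution of
\[
g_\lambda''(r) + \phi(r)\, g_\lambda'(r) = \lambda\, g_\lambda(r), \qquad g_\lambda(0) = 1, \quad g_\lambda'(0) = 0,
\]
its existence and smoothness at the regular singular point $r=0$ being a textbook Frobenius computation. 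Taking $\bar w_\lambda(x) := g_\lambda(d(x,p))$ produces a locally Lipschitz function on $M$ with $\bar w_\lambda(p) = 1$, and the proof then splits into ODE analysis (positivity, monotonicity, and exponential asymptotics of $g_\lambda$) and manifold analysis (the weak subsolution property via Laplacian comparison).

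\textbf{ODE analysis.} Rewriting the equation in Sturm-Liouville form $(e^{\Phi}\, g_\lambda')' = \lambda\, e^{\Phi}\, g_\lambda$, with $\Phi' = \phi$, and bootstrapping from the initial data shows that $g_\lambda > 0$ and $g_\lambda' \ge 0$ throughout $[0,\infty)$. For the asymptotic growth, when $\kappa > 0$ the coefficient satisfies $\phi(r) - (N-1)\sqrt{\kappa} = O(e^{-2\sqrt{\kappa}\, r})$, so the equation is an exponentially small perturbation of the constant-coefficient ODE $g'' + (N-1)\sqrt{\kappa}\, g' = \lambda\, g$, whose positive increasing solution is exactly $e^{\chi_\lambda r}$ by the very definition of $\chi_\lambda$. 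Standard variation-of-parameters or WKB analysis then gives $g_\lambda(r) = C\, e^{\chi_\lambda r}(1 + o(1))$ with $C > 0$, and the continuity and positivity of $r \mapsto g_\lambda(r)\, e^{-\chi_\lambda r}$ on $[0,\infty)$ upgrade this to the uniform lower bound $g_\lambda(r) \ge c\, e^{\chi_\lambda r}$. When $\kappa = 0$ the ODE is the modified Bessel equation, whose normalized solution satisfies $g_\lambda(r) \sim C\, e^{\sqrt{\lambda}\, r}/r^{(N-1)/2}$ at infinity; the polynomial factor forces a loss in the exponent, yielding only $g_\lambda(r) \ge c(\eps)\, e^{(\chi_\lambda - \eps) r}$ for arbitrary $\eps > 0$.

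\textbf{Subsolution property and main obstacle.} By the Laplacian comparison theorem under the Ricci lower bound, $\Delta d \le \phi(d)$ holds distributionally on $M$. Combined with $g_\lambda' \ge 0$ and the weak chain rule,
\[
\Delta \bar w_\lambda = g_\lambda''(d) + g_\lambda'(d)\, \Delta d \le g_\lambda''(d) + g_\lambda'(d)\, \phi(d) = \lambda\, g_\lambda(d) = \lambda\, \bar w_\lambda
\]
distributionally on $M$, completing the proof. The most delicate step, which I expect to be the main obstacle, is the rigorous distributional formulation of the Laplacian comparison across the cut locus of $p$: while $\Delta d \le \phi(d)$ holds pointwise on the smooth part of $d$, upgrading it to a global distributional (or even measure-valued) inequality is classical but requires either Calabi's trick of perturbing the base point along a minimal geodesic, or a direct barrier/approximation argument. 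Once this is in place, preservation of the inequality under composition with the increasing $C^1$ function $g_\lambda$ is routine.
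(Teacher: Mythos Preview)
Your approach is correct and follows the same overall strategy as the paper: build the radial $\lambda$-eigenfunction $g_\lambda$ on the model space, show it is positive and increasing, establish the exponential lower bound, and transplant to $M$ via the Laplacian comparison (which the paper, like you, invokes as standard, so the cut-locus issue you flag is not treated as an obstacle there). The genuine difference lies in how the lower bound \eqref{barr} is obtained. The paper writes $g_\lambda$ explicitly as a spherical average of elementary eigenfunctions---exponentials of Busemann functions $e^{\mu_\lambda b_\nu}$ on $\H_N$ in the case $\kappa>0$, and plane waves $e^{\sqrt{\lambda}\, z\cdot\nu}$ on $\R^N$ in the case $\kappa=0$---and then extracts the lower bound by restricting the integral to a spherical cap and estimating its measure. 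Your route via ODE asymptotics (perturbation of the constant-coefficient equation, Bessel asymptotics) is cleaner conceptually but, as written, yields constants $c=c(N,\lambda)$ and $c(\eps)=c(N,\eps,\lambda)$ depending on $\lambda$, since you appeal to $\inf_{r\ge 0} g_\lambda(r)\, e^{-\chi_\lambda r}>0$ for fixed $\lambda$. The paper's cap estimate produces the $\lambda$-independent $c(N)$, $c(N,\eps)$ stated in the Lemma. That said, the subsequent Corollary and the proof of the main theorem only ever apply the Lemma at a \emph{fixed} $\lambda$, so the $\lambda$-uniformity is not actually needed and your argument suffices for the paper's purposes.
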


\begin{proof}
We begin considering the case ${\rm Ric}_{M}\ge -(N-1)\, \kappa\, g$ for $\kappa>0$. Eventually rescaling the metric, we can suppose without loss of generality that $\kappa=1$. Moreover, the condition $\bar w_{\lambda}(p)=1$ can be dropped, as it suffices to eventually multiply by a suitable constant. Let $\H$ denote the real hyperbolic space of dimension $N$, with corresponding Laplace-Beltrami operator $\Delta_{\H}$ and distance ${\rm d}_{\H}$. We identify $\H$ with the open ball $B_{1}\subseteq \R^{N}$ equipped with the Poincar\'e metric $g=4\, (1-|z|^{2})^{-2}{\rm Id}$, obtaining in particular
\begin{equation}
\label{dH}
{\rm d}_{\H}(z, 0)=\log\left(\frac{1+|z|}{1-|z|}\right), \qquad \forall z\in B_{1}\subseteq \R^{N}.
\end{equation}
 The Busemann function $b_{\nu}$ for the geodesic ray $\gamma_{\nu}$ from $0$ with direction $\nu$, $|\nu|=1$ is explicitly given by
\[
b_{\nu}(z)=\lim_{t\to +\infty}{\rm d}_{\H}(\gamma_{\nu}(t), z)-t=-\log\left(\frac{1-|z|^{2}}{|z-\nu|^{2}}\right),
\]
and satisfies
\[
|\nabla_{\H} b_{\nu}|=1,\qquad \Delta_{\H}b_{\nu}=N-1.
\]
From the latters, we immediately compute 
\[
\Delta_{\H}\, e^{\mu\, b_{\nu}}=\mu\,(\mu+N-1)\,e^{\mu\, b_{\nu}}.
\]
For $\lambda>0$, we choose 
\[
\mu_{\lambda}:=\frac{1}{2}\big(1-N-\sqrt{(N-1)^{2}+4\, \lambda}\big)<1-N
\]
and let
\begin{equation}
\label{wlambda}
w_{\lambda, \nu}(z)=e^{\mu_{\lambda}\, b_{\nu}(z)}=\left(\frac{1-|z|^{2}}{|z-\nu|^{2}}\right)^{-\mu_{\lambda}},
\end{equation}
so that, being $\mu_{\lambda}\, (\mu_{\lambda}+N-1)=\lambda$, $w_{\lambda, \nu}$ is a positive $\lambda$-eigenfunction. 
Finally, we let 
\[
w_{\lambda}(z)=\int_{{\mathbb S}^{N-1}}w_{\lambda, \nu}(z)\, d{\cal H}^{N-1}(\nu),\qquad z\in B_{1}\subseteq \R^{N},
\]
which is again a positive $\lambda$-eigenfunction, radial by construction. As such, letting $f(r)=w_{\lambda}(z)$ with $r={\rm d}_{\H}(z, 0)$ and using polar hyperbolic coordinates, $f$ obeys
\[
f''(r)+(N-1)\,\tanh r\, f'(r)=\lambda\, f(r)\qquad f'(0)=0.
\]
Multiplying by $(\cosh r)^{N-1}$ both sides and integrating, we get
\[
(\cosh r)^{N-1}\, f'(r)=\int_{0}^{r}\left((\cosh \tau)^{N-1}\, f'(\tau)\right)'d\tau=\lambda\, \int_{0}^{r} (\cosh \tau)^{N-1}\, f(\tau)\, d\tau\ge 0
\]
which implies that $w_{\lambda}$ is radially increasing.
We claim that 
\begin{equation}
\label{claim}
 w_{\lambda}(z)\ge c_{N}\, e^{\chi_{\lambda}\, {\rm d}_{\H}(z, 0)}
\end{equation}
for $\chi_{\lambda}$ given in \eqref{chilambda}. Using $w_{\lambda, \nu}>0$ and the expression in \eqref{wlambda}, we get
\[
\begin{split}
\int_{\S^{N-1}}w_{\lambda, \nu}(z)\, d{\cal H}^{N-1}(\nu)&\ge \int_{\{\nu\in \S^{N-1}: |z-\nu|\le 2\,(1-|z|)\}}\left(\frac{1-|z|^{2}}{|z-\nu|^{2}}\right)^{-\mu_{\lambda}}\, d{\cal H}^{N-1}(\nu)\\
&\ge \left(\frac{1+|z|}{1-|z|}\right)^{-\mu_{\lambda}} {\cal H}^{N-1}\big(\big\{\nu\in \S^{N-1}: |z-\nu|\le 2\,(1-|z|)\big\}\big).
\end{split}
\]
Through an elementary geometric argument, we see that it holds
\[
{\cal H}^{N-1}\big(\big\{\nu\in \S^{N-1}: |z-\nu|\le 2\,(1-|z|)\big\}\big)\ge c_{N} \, (1-|z|)^{N-1}
\]
for some $c_{N}>0$, so that being $\mu_{\lambda}+N-1=-\chi_{\lambda}$,
\[
 w_{\lambda}(z)\ge  c_{N}\, \frac{(1+|z|)^{\chi_{\lambda}+N-1}}{(1-|z|)^{\chi_{\lambda}}}\ge c_{N}\left(\frac{1+|z|}{1-|z|}\right)^{\chi_{\lambda}}.
\]
Recalling formula \eqref{dH} for the distance ${\rm d}_{\H}$ proves \eqref{claim}. Finally, let $\bar w_{\lambda}\in {\rm Lip}_{loc}(M)$ be defined through
\[
\bar w_{\lambda}(x)= w_{\lambda}(z), \qquad \text{with} \quad {\rm d}_{\H}(z, 0)={\rm d}(x, p),
\] 
where ${\rm d}$ is the usual metric distance in $M$. Clearly $\bar w_{\lambda}$ is well defined by the radiality of $w_{\lambda}$ and \eqref{claim} holds true in $M$ as well by construction. Since ${\rm Ric}_{M}\ge - g$, and $w_{\lambda}$ is radially increasing, the Laplacian comparison implies that, weakly in $M$,
\[
\Delta \bar w_{\lambda}(x)\le \Delta_{\H} w_{\lambda}(z) =\lambda \, w_{\lambda}(z)=\lambda\, \bar w_{\lambda}(x).
\]
The case $\kappa=0$ is easier, as in the model space $\R^{N}$ we define
\[
w_{\lambda}(z)=\int_{\S^{N-1}}e^{\sqrt{\lambda}\, z\cdot \nu}\, d{\cal H}^{N-1}(\nu),\qquad z\in \R^{N},
\]
which is a radial positive $\lambda$-eigenfunction. It is radially increasing by integrating, as before, the corresponding ODE, so that it suffices to prove the pointwise lower bound. To this end, let $\eps>0$, $z=r\, e_{1}$, $e_{1}=(1,0, \dots, 0)$ and compute 
\[
\begin{split}
\int_{\S^{N-1}}e^{\sqrt{\lambda}\, z\cdot \nu}\, d{\cal H}^{N-1}(\nu)&\ge \int_{\{\nu \in \S^{N-1}:e_{1}\cdot \nu\ge (1-\eps)\}}e^{\sqrt{\lambda}\, z\cdot \nu}\, d{\cal H}^{N-1}(\nu)\\
&\ge e^{\sqrt{\lambda}\, (1-\eps)\, r}\, {\cal H}^{N-1}\big(\big\{\nu \in \S^{N-1}:e_{1}\cdot \nu\ge (1-\eps)\big\}\big)
\end{split}
\]
which proves the claimed lower bound when $M=\R^{N}$. Applying the Laplacian comparison as before, we get the claim.
\end{proof}

\begin{corollary}
Let  $M, p$  be as above. If $w>0$ solves $\Delta w=\lambda\, w$ for $\lambda>0$, then
\begin{equation}
\label{lb}
\liminf_{r\to +\infty}\sup_{\partial B_{r}(p)}\frac{\log w}{ r}\ge \chi_{\lambda}.
\end{equation}
\end{corollary}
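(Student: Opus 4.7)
The plan is to use the locally Lipschitz barrier $\bar w_{\lambda}$ from Lemma \ref{lemma2} as a comparison function for $w$. Since $\bar w_{\lambda}$ is strictly positive, the ratio $v:=w/\bar w_{\lambda}$ is a positive locally Lipschitz function on $M$ with $v(p)=w(p)$. The first goal is to show that $v$ satisfies the drift-type divergence inequality
\begin{equation}\label{divineq}
\operatorname{div}\bigl(\bar w_{\lambda}^{2}\,\nabla v\bigr)\ge 0\qquad \text{weakly on } M.
\end{equation}
On the formal level, this is immediate from $\Delta(\bar w_{\lambda}v)=\bar w_{\lambda}\Delta v+2\nabla\bar w_{\lambda}\cdot\nabla v+v\Delta\bar w_{\lambda}$: substituting $\bar w_{\lambda}v=w$ and $\Delta w=\lambda w$ yields $\bar w_{\lambda}\Delta v+2\nabla\bar w_{\lambda}\cdot\nabla v=v(\lambda\bar w_{\lambda}-\Delta\bar w_{\lambda})\ge 0$, which, multiplied by $\bar w_{\lambda}>0$, is exactly \eqref{divineq}.

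The crucial feature of \eqref{divineq} is that the operator $\operatorname{div}(\bar w_{\lambda}^{2}\nabla\cdot)$ is uniformly elliptic on every ball $B_{r}(p)$---since $\bar w_{\lambda}$ is continuous and strictly positive, hence bounded between two positive constants there---and carries no zero-order term. The standard weak maximum principle for divergence-form subsolutions then yields
\[
v(p)\le \sup_{\partial B_{r}(p)} v.
\]
Using $\bar w_{\lambda}(p)=1$ and the pointwise lower bound \eqref{barr}, the right-hand side is majorised by $(\sup_{\partial B_{r}(p)} w)/\inf_{\partial B_{r}(p)} \bar w_{\lambda}$, so that
\[
\sup_{\partial B_{r}(p)} w\ge w(p)\cdot\inf_{\partial B_{r}(p)} \bar w_{\lambda}\ge
\begin{cases} w(p)\, c(N)\, e^{\chi_{\lambda} r}& \text{if }\kappa>0,\\ w(p)\, c(N,\eps)\, e^{(\chi_{\lambda}-\eps)r}&\text{if }\kappa=0.\end{cases}
\]
Taking logarithms, dividing by $r$, letting $r\to+\infty$ and, in the $\kappa=0$ case, subsequently sending $\eps\to 0^{+}$ produces precisely \eqref{lb}.

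The main technical obstacle will be justifying \eqref{divineq} rigorously, because $\bar w_{\lambda}$ is only Lipschitz across the cut locus of $p$, so $\Delta\bar w_{\lambda}$ exists only as a distribution and $\nabla\log\bar w_{\lambda}$ is not classically defined. I plan to bypass this by keeping everything in divergence form: for an arbitrary nonnegative $\varphi\in C^{\infty}_{c}(M)$, I will test the equation $\Delta w=\lambda w$ against $\bar w_{\lambda}\varphi$ and the weak inequality $\Delta\bar w_{\lambda}\le \lambda\bar w_{\lambda}$ against $w\varphi$; after expanding $\nabla(\bar w_{\lambda}\varphi)$ and $\nabla(w\varphi)$ through the Leibniz rule and subtracting the two resulting identities, the $\lambda$-terms and the symmetric cross-term $\int\varphi\,\nabla w\cdot\nabla\bar w_{\lambda}$ cancel out, leaving exactly $-\int\bar w_{\lambda}^{2}\,\nabla v\cdot\nabla\varphi\ge 0$, which is the weak form of \eqref{divineq}.
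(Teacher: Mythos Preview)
Your argument is correct. Both your proof and the paper's rest on comparing $w$ with the barrier $\bar w_{\lambda}$ of Lemma~\ref{lemma2} via a maximum principle on balls, but the mechanisms differ. The paper argues by contradiction: if \eqref{lb} failed, then along a sequence $r_{n}\to\infty$ one would have $w\le \bar w_{\lambda}$ on $\partial B_{r_{n}}(p)$, and the weak comparison principle for the Schr\"odinger operator $-\Delta+\lambda$ (which has a strictly positive zero-order term) immediately gives $w(p)\le \bar w_{\lambda}(p)=1$, contradicting a normalisation $w(p)=2$. You instead perform the ground-state (Doob) transform $v=w/\bar w_{\lambda}$, derive the drift-type subsolution inequality $\operatorname{div}(\bar w_{\lambda}^{2}\nabla v)\ge 0$, and apply the maximum principle for an operator with \emph{no} zero-order term to obtain the lower bound on $\sup_{\partial B_{r}}w$ directly, for \emph{every} $r$. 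The paper's route is shorter---one line once the comparison principle for $-\Delta+\lambda$ is invoked---while yours is slightly more constructive and avoids the contradiction step; your careful weak-form derivation of \eqref{divineq} by testing $\Delta w=\lambda w$ against $\bar w_{\lambda}\varphi$ and $\Delta\bar w_{\lambda}\le\lambda\bar w_{\lambda}$ against $w\varphi$ and subtracting is exactly what is needed to handle the merely Lipschitz regularity of $\bar w_{\lambda}$ across the cut locus.
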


\begin{proof}
The statemenet of the corollary in unaffected by multiplying $w$ for positive constants, so we can suppose that $w(p)=2$. If \eqref{lb} is false, there exists $\eps>0$ and a sequence $\{r_{n}\}$ with $r_{n}\to +\infty$ such that for any sufficiently large $n$ it holds $w\le {\rm exp}(\chi_{\lambda}\, (1-2\,\eps)\, r_{n})$ on $\partial B_{r_{n}}(p)$. Let $\bar w_{\lambda}$ and $c=c(N, \eps)$ be given the previous Lemma. By \eqref{barr}, for sufficiently large $n$ it holds
\[
\bar w_{\lambda}\ge c\, {\rm exp}(\chi_{\lambda}\, (1-\eps)\, r_{n})\ge {\rm exp}(\chi_{\lambda}\, (1-2\,\eps)\, r_{n})\ge w
\]
on $\partial B_{r_{n}}(p)$, so that the weak comparison principle for $-\Delta +\lambda$ in $B_{r_{n}}(p)$ implies $\bar w_{\lambda}(p)\ge w(p)$. As $\bar w_{\lambda}(p)=1$ and $w(p)=2$, this is a contradiction. 
\end{proof}

\medskip

\begin{proof} of Theorem \ref{mainth}.

Suppose that ${\rm Ric}_{M}\ge -(N-1)\, \kappa\, g$ with $\kappa\ge 0$, let $u$ be a caloric ancient solution in $M\times\, ]-\infty, 1[$ and consider the representation given in \eqref{repre} of  Lemma \ref{lemma1}. We will prove, separately for $\kappa=0$ and $\kappa>0$, that  the assumed growth conditions force in both cases ${\rm supp}(\nu)=\{0\}$. This in turn implies that $u$ is stationary and harmonic, concluding the proof in the case $\kappa>0$,  while  an application Yau's elliptic Liouville theorem will ensure $u\equiv c >0$ in the case $\kappa=0$. 
\medskip

{\em Case $\kappa=0$}.
Suppose by contradiction that ${\rm supp}(\nu)\ne \{0\}$. Recall that  ${\rm Ric}_{M}\ge 0$ implies that $M$ possesses  positive $\lambda$-eigenfunctions only for $\lambda\ge 0$, therefore we can suppose that there exists $0<a<b$ such that $\nu([a, b])>0$. \\
As pointed out in the proof of Lemma \ref{lemma1}, the function $[a, b]\ni \lambda\mapsto w_{\lambda}\in \R^{M}$ is Borel when $\R^{M}$ has the pointwise convergence topology. 
By Proposition \ref{borb}, the set 
\[
E:=\left\{\log w: w>0,\ \Delta w=\lambda\, w\ \text{for some $\lambda\in [a,b]$}\right\}
\]
is equilipschitz, so that pointwise convergence and locally uniform convergence coincide on $E$. We metrize the latter topology through
\begin{equation}
\label{metric}
{\rm d}_{E}(f, g)=\sup_{n}\left\{\frac{1}{n}\sup_{B_{n}(p)}|f-g|\right\}
\end{equation}
which is finite for any $f, g\in E$ due to the above mentioned equilipschitzianity. We apply Lusin theorem to 
\[
[a, b]\ni\lambda \mapsto \varphi(\lambda):=\log w_{\lambda}\in E
\]
(which is therefore Borel from $[a, b]$ to $(E, {\rm d}_{E})$), obtaining a compact $K\subseteq [a, b]$ such that 
\[
\nu(K)\ge \nu([a, b])/2,\qquad \varphi\lfloor_{K}\in C^{0}(K, (E, {\rm d}_{E})).
\]
Fix a point $\lambda_{0}\in K$ such that 
\[
\nu\big(I_{r}(\lambda_{0})\cap K\big)>0\qquad \forall r>0
\]
where $I_{r}(\lambda_{0})=[\lambda_{0}-r, \lambda_{0}+r]$ and let $\eps>0$ to be determined.  The continuity of $\varphi$ in $\lambda_{0}$ implies that there exists $r_{\eps}>0$ such that for any $n\ge 1$,
\[
\log w_{\lambda_{0}}\le \log w_{\lambda}+\eps\, n,\qquad \text{in $B_{n}(p)$, } \forall \lambda\in I_{r_{\eps}}(\lambda_{0})\cap K.
\]
Taking the mean value over $I_{r_{\eps}}(\lambda_{0})\cap K$ with respect to the measure $\nu$ and using Jensen inequality we infer that in the ball $B_{n}(p)$ it holds
\[
\log w_{\lambda_{0}}\le \dashint_{I_{r_{\eps}}(\lambda_{0})\cap K}\log w_{\lambda}\, d\nu+\eps\, n\le \log \left(\dashint_{I_{r_{\eps}}(\lambda_{0})\cap K}w_{\lambda}\, d\nu\right)+\eps\, n
\]
and by the positivity of $w_{\lambda}$ and the representation \eqref{repre} of $u$ we conclude
\[
\log w_{\lambda_{0}}\le -\log\nu\big(I_{r_{\eps}}(\lambda_{0})\cap K\big)+\log u+\eps\, n
\]
in $B_{n}(p)$. We take the supremum on $\partial B_{n}(p)$, divide by $n$ and let $n\to +\infty$. By the assumption $\log u(x)\le o({\rm d}(x, p))$, we get
\[
\limsup_{n}\sup_{\partial B_{n}}\frac{\log w_{\lambda_{0}}}{n}\le \eps,
\]
which gives a contradiction to \eqref{lb} if $\eps<\chi_{\lambda_{0}}$.
\medskip

{\em Case $\kappa>0$}. Consider as before the Choquet representation \eqref{repre} of $u$. The assumption $u(x, t)\le e^{o({\rm d}(x, p)-t)}$ as ${\rm d}(x, p)-t\to +\infty$ entails 
\begin{equation}
\label{fin}
u(p, t)=\int e^{\lambda\, t}\, w_{\lambda}(p)\, d\nu\le e^{o(-t)}.
\end{equation}
The latter in turn implies that $\nu(]-\infty, 0[)=0$, for otherwise, being $\lambda\mapsto w_{\lambda}(p)$ Borel, Lusin's theorem provides a compact  $K\subset \, ]-\infty, 0[$ such that 
\[
\nu(K)>0,\qquad \lambda\mapsto w_{\lambda}(p)\in C^{0}(K, \R).
\]
Since $w_{\lambda}(p)>0$ for $\nu$ a.\,e.\,$\lambda$, we infer
\[
\int e^{\lambda\, t}\, w_{\lambda}(p)\, d\nu\ge e^{t\, \max K}\, \min_{K}w_{\lambda}(p)\, \nu(K),
\]
contradicting \eqref{fin} for $t\to -\infty$, due to $\max K<0$. The rest of the proof follows verbatim as in the previous case, showing that $\nu(]0, +\infty[)=0$ as well. Therefore ${\rm supp}(\nu)=\{0\}$ and thus $u$ is harmonic and stationary.

\end{proof}

\begin{remark}\label{finalR}
In the case $\kappa>0$ we actually proved the following statement. If there is a point $p\in M$ such that $u(p, t)\le e^{o(-t)}$ as $t\to -\infty$ and $t_{0}$ such that $u(x, t_{0})\le e^{o({\rm d}(x, p))}$ for ${\rm d}(x, p)\to +\infty$, then $u$ is a positive harmonic function.
\end{remark}
\vskip10pt
\noindent
{\small {\bf Acknowledgement.} The author is a member of GNAMPA (Gruppo Nazionale per l'Analisi Matematica, la Probabilit\`a e le loro Applicazioni) of INdAM (Istituto Nazionale di Alta Matematica 'Francesco Severi') and is supported by the grant PRIN n.\ 2017AYM8XW: {\em Non-linear Differential Problems via Variational, Topological and Set-valued Methods} and the grant PdR 2018-2020 - linea di intervento 2: {\em Metodi Variazionali ed Equazioni Differenziali} of the University of Catania.}


\begin{thebibliography}{99}

\bibitem{B}
A. Borb\'ely. Gradient comparison theorems for harmonic functions on Riemannian manifolds. {\em Quart. J. Math. } {\bf 51}(2000), 19--37. 


\bibitem{CM}
T. H. Colding and  W. P. Minicozzi  II.  Liouville  properties. Arxiv preprint arXiv:1902.09366.

\bibitem{C}
G. Choquet. Lectures on analysis. Vol. II: Representation theory. Edited by J. Marsden, T. Lance and S. Gelbart W. A. Benjamin, Inc., New York-Amsterdam 1969.

\bibitem{DMV}
F. G. D\"uzg\"un, S. Mosconi and V. Vespri. Harnack and pointwise estimates for degenerate or singular parabolic equations. In {\em Contemporary Research in Elliptic PDEs and Related Topics} S. Dipierro ed., Springer INdAM series {\bf 33} (2019).
 
\bibitem{GM}
N. Gigli and S. Mosconi. The Abresch--Gromoll inequality in a non-smooth setting. {\em Discrete Contin. Dyn. Syst., ser. A } {\bf 34} (2014), 1481--1509.

\bibitem{H93} R. S. Hamilton. A Matrix Harnack Estimate for the Heat Equation. {\em Comm. Anal. Geom.} {\bf 1} (1993), 113--126.

\bibitem{H52} I. I. Hirschman Jr.  A note on the heat equation. {\em Duke Math. J.} {\bf 19} (1952), 487--492.

\bibitem{Huang}
J.-C. Huang. Local gradient estimates for heat equation on $RCD^*(K, N)$ metric measure spaces. {\em Proc. Amer. Math. Soc.} {\bf 146} (2018), 5391--5407. 

\bibitem{KPP}
A. Kogoj, Y. Pinchover and S. Polidoro. On Liouville-type theorems and the uniqueness of the positive Cauchy problem for a class of hypoelliptic operators. {\em J. Evol. Equ.} {\bf 16} (2016), 905--943.

\bibitem{K} B. L. Kotschwar. Hamilton's gradient estimate for the heat kernel on complete manifolds. {\em Proc. Amer. Math. Soc.} {\bf 135} (2007), 3013--3019.

\bibitem{KT}
A. Kor\'anyi and J. C. Taylor. Minimal solutions of the heat equation and uniqueness of the positive Cauchy problem on homogeneous spaces. {\em Proc. Amer. Math. Soc.} {\bf 94}(1985),  273--278.

\bibitem{L} P. Li. Harmonic functions on complete Riemannian manifolds. In Handbook of Geometric Analysis vol. I, {\em Advanced Lectures in Mathematics} {\bf 7} (2008), 195--227. Higher Education Press and International Press, Beijing-Boston. 
 
 
\bibitem{LY} P. Li and S. T. Yau. On the parabolic kernel of the Schr\"odinger operator 
{\em Acta Math.}  {\bf 156} (1986), 153--201.

\bibitem{LZ} F. Lin and Q. S. Zhang. On ancient solutions of the heat equation. {\em Comm. Pure Appl. Math.} {\bf 72} (2019), 2006--2028.

\bibitem{M} M. Murata. Uniqueness and nonuniqueness of the positive Cauchy problem for the heat equation on Riemannian manifolds. {\em Proc. Amer. Math. Soc.} {\bf 123}(1995),  1923--1932.

\bibitem{P} Y. Pinchover. Representation theorems for positive solutions of parabolic equations. {\em Proc. Amer. Math. Soc.} {\bf 104}(1988), 507--515.

\bibitem{SZ} Ph. Souplet and Q. S. Zhang. Sharp gradient estimate and Yau's Liouville theorem for the heat
equation on noncompact manifolds. {\em Bull. London Math. Soc.} {\bf 38} (2006), 1045--1053.

\bibitem{Yau} S. T. Yau. Harmonic functions on complete Riemannian manifolds. {\em Commun. Pure Appl. Math.} {\bf 28} (1975), 201-228.

\bibitem{W} D. V. Widder. The r\^ole of the Appell transformation in the theory of heat conduction. {\em Trans. Amer. Math. Soc.} {\bf 109} (1963), 121--134. 

\bibitem{ZZ} H.-C. Zhang and X.-P. Zhu. Local Li--Yau's estimates on $RCD^{*}(K, N)$ metric measure spaces. {\em Calc. Var. Partial Differential Equations} {\bf 55} (2016), 55: 93.
\end{thebibliography}
\end{document}